\newtheorem{theorem}{Theorem}
\newtheorem{prop}{Proposition}
\newtheorem{definition}{Definition}
\newtheorem{example}{Example}
\newtheorem{lemma}{Lemma}
\newtheorem{remark}{Remark}
\newtheorem{conjecture}{Conjecture}
\newcommand{\FF}{\mathbb{F}}
\begin{document}

\title{A result on the $c_2$ invariant for powers of primes.}

\author{Maria S. Esipova, Karen Yeats\footnote{KY is supported by an NSERC Discovery Grant and the Canada Research Chairs program.  MSE was supported by an NSERC USRA. KY thanks Oliver Schnetz for many discussions on the $c_2$ invariant over the years.}}

\maketitle              

\begin{abstract}
The $c_2$ invariant is an arithmetic graph invariant related to quantum field theory.  We give a relation modulo $p$ between the $c_2$ invariant at $p$ and the $c_2$ invariant at $p^s$ by proving a relation modulo $p$ between certain coefficients of powers of products of particularly nice polynomials.  The relation at the level of the $c_2$ invariant provides evidence for a conjecture of Schnetz. 
\end{abstract}


\section{Introduction}

The $c_2$ invariant is an arithmetic graph invariant defined by Oliver Schnetz in \cite{SFq}. It is defined based on counting points over finite fields on the Kirchhoff polynomial of a graph, as we will see below.  

Quantum field theory provides motivation for studying the $c_2$ invariant. Considering some graph as a scalar Feynman diagram, its Feynman integral can be written using the Kirchhoff polynomial and the geometric content of the integral is substantially controlled by the geometry of the variety defined by the vanishing of the Kirchhoff polynomial. In particular, a residue of the Feynman integral known as the Feynman period is an integral over the inverse square of the Kirchhoff polynomial. The geometry of the Kirchhoff variety determines the kinds of numbers the Feynman integral can involve, see \cite{bek, BKphi4, Sphi4, Snumfun}. Both the $c_2$ invariant and the Feynman period are thus probing the geometry of the Kirchhoff variety and so are intimately related.

The $c_2$ invariant is also interesting from a pure mathematics perspective because of the interesting sequences of numbers which appear, including coefficient sequences of $q$-expansions of modular forms, see \cite{BrS, BrS3, Lmod, further}. 

Often the $c_2$ invariant is only considered for primes even though it is defined for all prime powers.  In particular, the identification with modular forms involves only the primes. Schnetz conjectured (see Conjecture~\ref{oliver} below) that knowing the values on all primes suffices to determine the $c_2$ invariant on all prime powers. We prove a weaker result in that direction in our main result, Theorem~\ref{thm1}.

\subsection{Background and set up}

Consider a connected 4-regular simple graph $K$, and let $G= K-v$ for some vertex $v$ of $K$.  We say $G$ is a \textbf{decompletion} of $K$ and $K$ is the \textbf{completion} of $G$.
$G$ can be viewed as a Feynman graph in $\phi^4$ theory with 4 external edges. The external edges are given by the edges incident to $v$, but no longer joined at $v$.  Note that while multiple edges and self-loops can appear in quantum field theory, they always result in subdivergences and since we will soon be restricting to the primitive, that is, subdivergence-free, case, there is no loss in working only with simple graphs.
Associate a parameter $\alpha_e$ to each edge $e$ of $G$.

\begin{definition}
The \textbf{Kirchhoff polynomial} of $G$ is 
\[
\Psi_G = \sum_T \prod_{e \notin T} \alpha_e
\]
where the sum runs over all spanning trees of $G$.
\end{definition}

We further define the \textbf{Feynman period} of $G$ from the Kirchhoff polynomial by
\[
\int_{\alpha_i \geq 0} \dfrac{\Omega}{\Psi_G^2} 
\]
when it converges, where $\Omega = \sum_{i=1}^{|E(G)|} (-1)^{i}\alpha_i d\alpha_1 \ldots \widehat{d\alpha_i} \ldots d\alpha_{|E(G)|}$.  This is the Feynman period in projective parametric form.  For other forms see \cite{Sphi4}.

The Feynman period of $G$ is known to converge in the case that $K$ (the completion of $G$) is internally 6-edge connected, that is, in the case where removing any 5 edges of $K$ results in at most one connected component that is not an isolated vertex. In the case that the completion of $G$ is internally 6-edge connected we say $G$ is \textbf{primitive divergent} \cite{Sphi4}.
Feynman periods are hard to evaluate but give number-theoretically interesting results, see for instance \cite{Sphi4}.  The geometry of the integral is controlled by the geometry of the denominator and it is therefore natural to study rational points on the variety $\Psi_G = 0$ over various finite fields. With this motivation, we define the combinatorial graph invariant, the \textbf{$c_2$ invariant}.

For a prime power $q$, let $\FF_q$ be the finite field with $q$ elements. For polynomial $F \in \mathbb{Z}[x_1, \ldots x_N]$, let $[F]_q$ be the point counting function
\[
    [F]_q : q \rightarrow |\{x \in \FF_q : F(x)=0\}|.
\] 

\begin{definition}
Let $q = p^s$ for $p$ prime and $s \geq 1$. For $G$ with $|V(G)| \geq 3$, the \textbf{$c_2$ invariant} of $G$ at \textbf{$q$} is
\[
c_2^{(q)}(G) = \dfrac{[\Psi_G]_q}{q^2} \mod q.
\]
\label{def:c2}
\end{definition} 
The $c_2$ invariant is well-defined given $G$ is connected and has at least 3 vertices \cite{SFq}. 
The fact that we define the $c_2$ invariant at $q$ as a residue modulo $q$ rather than as a nonnegative integer is primarily to be consistent with other work on the $c_2$ invariant.  Little would change defining it without the modulo $q$ --- what are by our definition $c_2$ identities for the residue definition would become congruences modulo $q$.  While there has been some consideration of the nonnegative integer version of the $c_2$ invariant, to our knowledge the exact integers do not seem to be particularly meaningful or structured, while the residues tell us something about the underlying geometry, giving an additional justification for the definition that we give.

In the 90's, all known Feynman integrals were expressible in terms of multiple zeta values.  Inspired by this, in 1997 Kontsevich informally conjectured that $[\Psi_G]_q$ might always be a polynomial in $q$, in which case the $c_2$ invariant would capture the quadratic coefficient of $[\Psi_G]_q$. Neither the expressibility of Feynman periods in terms of multiple zeta values nor the polynomiality of $[\Psi_G]_q$ are true in general, see \cite{BrBe, SFq, BrS, Doreg}. However the $c_2$ invariant remains a useful measure of how far from true they are for a given graph.


In view of such close connections, Schnetz (Remark 2.11 (2) in \cite{SFq}) and Brown and Schnetz (Conjecture 2 in \cite{BrS3}) conjectured that if two primitive divergent graphs have the same period then they have the same $c_2$ invariant for all prime powers $q$. This is supported by all current known evidence. Since the Feynman period is known not to depend on the decompletion vertex $v$ \cite{Sphi4}, this conjecture implies the weaker conjecture of Brown and Schnetz \cite{BrS} --- the $c_2$ invariant does not depend on the decompletion vertex. This conjecture is called the \textbf{completion conjecture} and recent work of Simone Hu proves the $q=2$ case \cite{Hmmath, HYcompletion}.

Most crucially for us, in \cite{Sgeometries} Oliver Schnetz makes the conjecture that $c_2$ invariants only have to be considered for primes,  
\begin{conjecture}[Conjecture 2 in \cite{Sgeometries}]\label{oliver}
Let $G_1$, $G_2$ be two graphs with $c_2^{(p)}(G_1) \equiv c_2^{(p)}(G_2) \mod p$ for all primes $p$, then $c_2^{(q)}(G_1) \equiv c_2^{(q)}(G_2) \mod q$ for all prime powers $q = p^s$.
\end{conjecture}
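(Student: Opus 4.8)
The plan is to pass from point counts to Frobenius eigenvalues and then to control those eigenvalues $p$-adically to the full precision $p^s$. Let $X_G$ be the affine hypersurface $\{\Psi_G = 0\}$ over $\FF_p$. By the Grothendieck--Lefschetz trace formula, $[\Psi_G]_{p^s} = \sum_k (-1)^k \mathrm{tr}(\mathrm{Frob}_p^s \mid H^k_c(X_G))$, so $[\Psi_G]_{p^s}$ is a fixed integral combination of $s$-th powers of the Frobenius eigenvalues $\alpha$ of $X_G/\FF_p$. Fixing an embedding $\overline{\mathbb{Q}} \hookrightarrow \overline{\mathbb{Q}}_p$ and writing $v = v_p(\alpha)$ for the $p$-adic slope, the normalized contribution of $\alpha$ to $c_2^{(p^s)}(G) = [\Psi_G]_{p^s}/p^{2s} \bmod p^s$ has valuation $s(v-2)$. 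Since $[\Psi_G]_q$ is divisible by $q^2$, the slopes $v<2$ must cancel in aggregate, the slopes $v \ge 3$ die modulo $p^s$, and so $c_2^{(p^s)}(G) \bmod p^s$ is governed entirely by the eigenvalues of slope in $[2,3)$, with the slope-exactly-$2$ unit roots $u = \alpha/p^2$ producing the residue detected by $c_2^{(p)}(G) \bmod p$. This is the $p$-adic refinement of the combinatorial handle behind the weaker Theorem~\ref{thm1}: the mod-$p$ coefficient formula for $[\Psi_G]_q$ (as a coefficient of $\Psi_G^{q-1}$) records only the slope-$2$ part modulo $p$, whereas reaching the full modulus $q=p^s$ demands these eigenvalues to precision $p^s$.

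With this reduction, the conjecture becomes the assertion that matching the leading datum $c_2^{(p)} \bmod p$ at \emph{every} prime $p$ forces the whole slope-$[2,3)$ eigenvalue package of $G_1$ and $G_2$ to agree to precision $p^s$, whence the corresponding $s$-th power sums agree modulo $p^s$ and $c_2^{(p^s)}(G_1) \equiv c_2^{(p^s)}(G_2)$ follows for all $s$. The natural route is global rather than prime-by-prime: the graphs are fixed, so the relevant weight-$4$ cohomology cuts out compatible systems of Galois representations of bounded conductor, and I would try to show that agreement of the residual Frobenius traces at all $p$ pins these systems down exactly. Concretely I would isolate the ``$c_2$ piece'' as a motive $M_G$ whose Frobenius trace $a_p$ satisfies $a_p/p^2 \equiv c_2^{(p)}(G) \pmod p$, argue via modularity of the residual and then the characteristic-zero representations, together with strong multiplicity one for the associated automorphic forms, that $M_{G_1}$ and $M_{G_2}$ have equal $L$-factors at every $p$, and conclude equality of all Frobenius eigenvalues. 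Equal eigenvalues give equal power sums, hence equality of the counts on the relevant piece, which is much stronger than the sought congruence.

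The main obstacle is precisely the passage from mod-$p$ congruences to honest equality, and it is where the conjecture earns its difficulty. A trace of $\mathrm{Frob}_p$ known only modulo $p$ determines a unit root $u$ only modulo $p$, never modulo $p^s$; residual agreement at one prime says nothing about higher $p$-adic digits, and in general two distinct compatible systems can be residually isomorphic at all $p$. Upgrading the global hypothesis to rigidity therefore requires the underlying object to be automorphic and to satisfy a multiplicity-one principle, neither of which is available for graph hypersurfaces in general. Worse, the motivic structure of the $c_2$ invariant is itself only conjectural: the counts $[\Psi_G]_q$ are known to be non-polynomial, and the $c_2$ sequence is modular in some cases but of no obvious standard type in others, so there is at present no canonical $M_G$ to feed into the argument.

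A fallback, closer to the paper's combinatorics, would be to lift the mod-$p$ coefficient formula directly to a congruence modulo $p^s$ using $p$-adic machinery (Gross--Koblitz, or Stienstra's expression of point counts modulo $p^s$ through the Artin--Mazur formal group and its unit root), thereby writing $c_2^{(p^s)}(G)$ via a Dwork-type recursion in the unit root. Even there the recursion is driven by the unit root as a $p$-adic number rather than by its residue, so the same gap reappears. My expectation is that the full statement lies beyond congruence methods alone and genuinely requires establishing the motivic or automorphic structure of the $c_2$ invariant; the mod-$p$ relation of Theorem~\ref{thm1} is the most that the coefficient-of-a-power technique can deliver unaided.
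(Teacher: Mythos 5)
The statement you were asked about is not proved in the paper at all: it is an open conjecture of Schnetz (Conjecture~\ref{oliver}), and the paper only establishes the strictly weaker Theorem~\ref{thm1}, namely that the conjectured relation holds modulo $p$ rather than modulo $q=p^s$. So there is no proof in the paper to compare yours against, and your proposal --- which candidly ends by conceding that the full statement ``lies beyond congruence methods alone'' --- is a research sketch rather than a proof. To its credit, the sketch correctly identifies why the paper's own technique stops at modulo $p$: Lemma~\ref{lma3} converts point counts to coefficients only modulo $p$, and your observation that a Frobenius trace known modulo $p$ determines a unit root only to one $p$-adic digit is the right way to see that this is an intrinsic obstruction, not an artifact.

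The genuine gaps in your argument are the following. First, the reduction of $c_2^{(p^s)}\bmod p^s$ to the ``slope-$[2,3)$ eigenvalue package'' is not rigorous as stated: individual Frobenius eigenvalue contributions $\alpha^s/p^{2s}$ need not be $p$-adic integers term by term, the asserted aggregate cancellation of the slope-$<2$ part for all $s$ simultaneously is not justified, and the eigenvalues need not have integer slopes, so the bookkeeping of which terms ``die modulo $p^s$'' requires care. Second, and more fundamentally, the step from residual agreement of traces at all primes to exact equality of the compatible systems invokes modularity and strong multiplicity one, neither of which is available for graph hypersurfaces; as the paper notes (citing \cite{BrBe, SFq, BrS, Doreg}), the point counts $[\Psi_G]_q$ are not polynomial in general and the underlying motives have no established automorphic description, so there is no canonical object $M_G$ to which these rigidity theorems apply. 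Indeed the paper's closing discussion of $P_{8,36}$ and $P_{8,37}$ shows that the prime-by-prime rigidity you would need at a single prime is false: both graphs have $c_2^{(2)}=1$ yet $c_2^{(8)}$ equal to $1$ and $-1$ respectively, so any proof must genuinely use the hypothesis at \emph{all} primes, which your global strategy gestures at but does not carry out. The conjecture remains open, and your proposal should be read as a (reasonable) plan of attack rather than a proof.
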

The mod $q$ in the statement of the conjecture is redundant since the $c_2$ invariant at $q$ is defined as a residue modulo $q$, however it is included to emphasize the contrast of $p$ and $q$ between the conjecture and our result.

Much of the computational work on $c_2$ invariants has only been on primes for reasons of computational simplicity; this conjecture would justify this simplification. Here we will prove the weaker result that if $c_2^{(p)}(G_1) \equiv c_2^{(p)}(G_2) \mod p$ then $c_2^{(q)}(G_1) \equiv c_2^{(q)}(G_2) \mod p$ for all powers $q$ of $p$ --- the conjectured relation holds modulo $p$, rather than the stronger modulo $q = p^s$, see Theorem~\ref{thm1}.
Note that our approach to the problem depends on converting point counting to coefficient extraction, a result which is intrinsically modulo the prime $p$, not $q$, so our methods are not suitable to proving the full conjecture and the result we give is best possible in this direction given the limitations of these methods.  This is discussed in more detail immediately following Lemma~\ref{lma3}. 

\section{Calculating $c_2$ invariants by counting edge partitions}\label{sec edge part}

Following up on the recent successes of one of us with other coauthors \cite{CYgrid, Ycirc, Yscompl, Yprefix}, including the recent progress on the completion conjecture \cite{Hmmath, HYcompletion}, we will approach calculating $c_2$ invariants in a combinatorial way involving counting certain edge partitions.  We need a few definitions leading up to this.

We can express the Kirchhoff polynomial in the form of a determinant. Choose an arbitrary orientation of the edges of $G$ and let $\tilde{E}$ be the $|V(G)| \times |E(G)|$ signed incidence matrix given by
\[
(\tilde{E})_{v,e} = 
\begin{cases}
+1 & \text{if edge $e$ begins at $v$} \\
-1 & \text{if edge $e$ ends at $v$} \\
0 & \text{otherwise}
\end{cases}
\]
$\tilde{E}$ does not have full rank so we let $E$ equal to $\tilde{E}$ with one row removed.
Let $A$ be the $|E(G)| \times |E(G)|$ matrix with the parameters $\alpha_e$ on the diagonal, zeroes elsewhere. Let 
\[
M = \begin{bmatrix}
A & E^T \\
-E & 0
\end{bmatrix}.
\]
By the matrix-tree theorem (see \cite{Brbig} for a proof), 
\[
\Psi_G = \det M.
\]

We are also interested in minors of $M$. Following Brown we make the following definition of \textbf{Dodgson polynomials} \cite{Brbig}.
\begin{definition}
Let $I, J, K$ be subsets of $\{1, \ldots, |E(G)| \}$ with $|I| = |J|$. Let
\[
\Psi^{I,J}_{G,K} = \det M(I,J) \mid_{\alpha_e = 0, \, e \in K}
\]
where $M(I,J)$ is the matrix $M$ with rows indexed by $i \in I$ and columns indexed by $j \in J$ removed.
\end{definition}
Note we suppress $G$ from the notation when clear from context and similarly omit $K = \emptyset$. 
As given, Dodgson polynomials are only well-defined up to sign. This ambiguity was resolved by Schnetz in \cite{Sgeometries}, but the unsigned version will suffice for the present purposes.

To view Dodgson polynomials in a more combinatorial way rather than from determinants we make the following definition:
\begin{definition}
Let $P$ be a partition of a subset of the vertices of $G$. The \textbf{spanning forest polynomial of $G$ with respect to $P$} is 
\[
\Phi^P_G = \sum_F \prod_{e \notin F} \alpha_e
\]
where the sum runs over all spanning forests $F$ of $G$ \textbf{compatible} with $P$ in the sense that each tree of the forest contains all vertices in one part of $P$ and no vertices from another part of $P$. 
\end{definition}
Here by a spanning forest we mean a subgraph of $G$ that includes all vertices of $G$ (that is, it is spanning) and has no cycles (that is, it is a forest).  Note we allow singleton trees in forests and we illustrate a partition of the vertices on a graph by coloring some of the vertices.  Another way to phrase the compatibility condition is that there is a bijection between the parts of the partition and the trees of the forest such that the vertices of a part are contained in the corresponding tree.
When it is unambiguous we will simplify the notation and write for instance $P=ab,c$ instead of $P = \{ a,b\}, \{c \}$.

\begin{figure}
    \centering
    \includegraphics{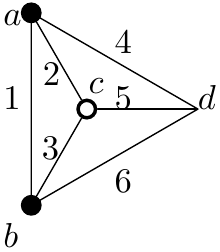}
    \caption{An example of a primitive divergent graph with a partition of a subset of vertices represented by vertex colourings.}
    \label{fig sp}
\end{figure}

\begin{example}
Consider the graph given in Figure~\ref{fig sp} with partition $P = \{a,b\}, \{c\}$ represented by the colouring of three of the four vertices. Then the spanning forest polynomial with respect to $P$ is $\Phi^P = \alpha_2\alpha_3(\alpha_4\alpha_5+\alpha_4\alpha_6 + \alpha_5\alpha_6 + \alpha_1\alpha_5)$.
\end{example}

\begin{remark}\label{rem linear}
Observe that Dodgson polynomials and spanning forest polynomials are linear in each variable.  For the spanning forest polynomials this is immediate from the definition, as it is for the Kirchhoff polynomial itself.  For the Dodgson polynomials it follows directly from the fact that the matrix $M$ contains each variable exactly once so any term in any determinant of any submatrix of $M$ is at most linear in each variable, or alternately can be seen as an immediate corollary of Lemma~\ref{lma1}.
\end{remark}

In order to discuss the $c_2$ invariant from a combinatorial perspective, we use the following lemmas to transform the problem of polynomials and point-counting to one of counting partitions of the edges into spanning trees and forests. \\

The first step is to express Dodgson polynomials as sums of spanning forest polynomials.  To interpret $\Psi^{I,J}_{G,K}$, the key is to look at spanning forests which become trees in both $G \setminus I \, /\, (J \cup K)$ and $G \setminus J \, /\, (I \cup K)$. In other words, subsets of the edges of $G\setminus (I\cup J\cup K)$ that form an acyclic, not necessarily connected subgraph of $G$, such that after removing edges in $I$ (respectively $J$), and contracting edges in $J$ and $K$ (respectively $I$ and $K$) our subset forms a now connected, still acyclic
graph. Conveniently this condition can be captured by some set partitions of the endpoints of the edges in $I\cup J\cup K \setminus (I\cap J)$ leading to the sum of spanning forest polynomials of the next lemma.  Example~\ref{3-inv} gives an illustrative example of how this works out in a useful situation.  In fact, the calculation of Example~\ref{3-inv} is all that we will need from Lemma~\ref{lma1} in the subsequent arguments.

\begin{lemma}[Corollary 7 and Propositions 8 and 12 in \cite{BrY}]\label{lma1}
Let $I, J, K$ be subsets of $\{1, \ldots, |E(G)| \}$ with $|I| = |J|$. Then
\[
\Psi^{I,J}_{G,K} = \sum_P \pm \Phi^P_{G \setminus {I \cup J \cup K}}
\]
where the sum runs over partitions $P$ of the endpoints of edges in $I \cup J \cup K \setminus (I \cap J)$ such that all spanning forests compatible with $P$ become spanning trees in both $G \setminus I \, /\, (J \cup K)$ and $G \setminus J \, /\, (I \cup K)$. 
\end{lemma}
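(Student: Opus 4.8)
The plan is to return to the determinantal definition $\Psi^{I,J}_{G,K}=\det M(I,J)\mid_{\alpha_e=0,\,e\in K}$ and to expand this minor combinatorially, exploiting the $2\times 2$ block structure of $M$: the diagonal matrix $A$ in the top-left block and the signed incidence blocks $\pm E$ off the diagonal. In the Leibniz expansion of $\det M(I,J)$, each nonzero term comes from a bijection between the surviving rows and columns. Since $A$ is diagonal, I would sort these bijections by the set $W$ of edge-rows that are paired to their own diagonal column; such rows contribute the factor $\prod_{e\in W}\alpha_e$. The vertex-rows can only be matched to edge-columns through $-E$, and the remaining edge-rows to vertex-columns through $E^{T}$. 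A counting argument (using $|I|=|J|$) shows these matchings are forced to be square submatrices of the incidence matrix, so each term equals $\pm\prod_{e\in W}\alpha_e$ times a product of two incidence minors, each $0$ or $\pm1$. Hence $\det M(I,J)=\sum_{W}\pm\prod_{e\in W}\alpha_e$, the sum running over those $W$ for which both incidence submatrices are nonsingular.

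The next step is to read off \emph{which} sets $W$ survive. A square submatrix of a signed incidence matrix is nonsingular exactly when its columns (edges) form a spanning tree. Because the deleted rows $I$ and the deleted columns $J$ are not symmetric, the two incidence minors impose two distinct conditions: the complementary edge set must form a spanning tree after deleting $I$ and contracting $J\cup K$, and simultaneously after deleting $J$ and contracting $I\cup K$. Setting $\alpha_e=0$ for $e\in K$ kills every monomial in which some $e\in K$ lies in $W$, which is exactly the effect of contracting the edges of $K$ in the Kirchhoff sense. Finally, if $e\in I\cap J$ then removing both row $e$ and column $e$ erases the only occurrence of $\alpha_e$ and both incidence occurrences of $e$, so $e$ is deleted outright and contributes nothing to the endpoint data, accounting for the $\setminus(I\cap J)$ in the statement.

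It then remains to package the surviving monomials into spanning forest polynomials. The combinatorial content of the two spanning-tree conditions is a constraint on how the endpoints of the edges in $I\cup J\cup K\setminus(I\cap J)$ are joined up by the forest on $G\setminus(I\cup J\cup K)$. Recording this joining as a set partition $P$ of those endpoints, the forests realizing a fixed $P$ are precisely the spanning forests of $G\setminus(I\cup J\cup K)$ compatible with $P$, and summing their complementary monomials $\prod_{e\notin F}\alpha_e$ gives $\Phi^{P}_{G\setminus(I\cup J\cup K)}$. Collecting the terms of $\det M(I,J)\mid_{\alpha_e=0,\,e\in K}$ by $P$ then yields the claimed $\sum_P\pm\Phi^{P}_{G\setminus(I\cup J\cup K)}$.

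The main obstacle I anticipate is the bookkeeping of signs, and in particular verifying that all forests compatible with a single partition $P$ enter with one common sign, so that they genuinely assemble into $\pm\Phi^{P}$ rather than into a signed combination of individual forest monomials. This requires showing that the sign attached to a surviving monomial depends only on $P$ and not on the particular forest realizing it, which is exactly the point where the arbitrary edge orientation and the choice of the removed incidence row must be controlled; this is also why the Dodgson polynomials are only well defined up to sign, as noted above. Equally delicate is confirming that the two conditions, one for $G\setminus I\,/\,(J\cup K)$ and one for $G\setminus J\,/\,(I\cup K)$, are captured \emph{simultaneously} by a single partition of the shared endpoint set rather than by two independent partitions.
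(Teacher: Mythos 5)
The paper does not prove this lemma from scratch: its ``proof'' is a reduction to, and citation of, Corollary~7 and Propositions~8 and~12 of \cite{BrY} (first handling $I\cap J=K=\emptyset$ via Propositions~8 and~12, then reducing the general case by contracting $K$ and deleting $I\cap J$ via Corollary~7). You instead attempt a direct proof by Leibniz-expanding the minor $\det M(I,J)$ using the block structure of $M$. Your architecture is essentially the standard all-minors matrix--tree argument that underlies the cited results, and most of the steps are sound: sorting terms by the set $W$ of diagonal entries used, recognizing each coefficient as a product of two incidence minors (one missing the columns of $I$, one missing the columns of $J$), translating nonvanishing of those minors into the two spanning-tree conditions on the complement of $W$, and observing that setting $\alpha_e=0$ for $e\in K$ enacts contraction while deleting both row and column for $e\in I\cap J$ enacts deletion. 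One small imprecision: a non-maximal square submatrix of the reduced incidence matrix is nonsingular when its edges form a forest in which each tree contains exactly one of the omitted vertices, not literally a spanning tree; this refinement is what actually produces the partition $P$, though your subsequent deletion/contraction phrasing lands in the right place. The simultaneity worry you raise at the end is not actually delicate: whether a forest of $G\setminus(I\cup J\cup K)$ becomes a spanning tree in each of the two minors depends only on which endpoints of $I\cup J\cup K\setminus(I\cap J)$ lie in which tree, i.e.\ only on $P$.

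The genuine gap is the sign step, which you explicitly flag as an ``anticipated obstacle'' but never resolve. As your expansion stands, you obtain $\det M(I,J)\mid_{\alpha_e=0,\,e\in K}=\sum_{F}\varepsilon_F\prod_{e\notin F}\alpha_e$ with a sign $\varepsilon_F\in\{\pm1\}$ attached to each individual surviving forest $F$, coming from the signs of the two incidence minors. To assemble these monomials into $\sum_P\pm\Phi^P$ you must show that $\varepsilon_F$ depends only on the partition $P$ induced by $F$, not on $F$ itself. This is exactly the nontrivial content of Proposition~12 of \cite{BrY} (where the sign is computed explicitly as a function of the partition and the interlacing of the marked endpoints), and it does not follow from anything you have written: a priori the signs of the two incidence minors vary with $F$, and only their \emph{product}, suitably analyzed, is constant on each compatibility class. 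Without this step the claimed identity could fail by internal cancellation within a single $\Phi^P$. So the proposal is a correct and genuinely more self-contained outline than the paper's citation-based proof, but it is incomplete at precisely the point where the real work lies.
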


\begin{proof}
In the case $I\cap J = \emptyset$ and $K=\emptyset$
Proposition 12 of \cite{BrY} gives $\Psi^{I,J}_{G,K}$ as a signed sum of spanning forest polynomials defined by certain partitions of the endpoints of $I$ and $J$.  Proposition 8 of \cite{BrY} specifies that the partitions required are precisely those for which a compatible spanning forest becomes a spanning tree in $G\setminus I \,/ \,J$ and in $G\setminus J\,/\,I$.  

Corollary 7 of \cite{BrY} lets us reduce to the case $I\cap J=K=\emptyset$ from the general case by contracting edges of $K$ and deleting edges of $I\cap J$.  Then by the previous paragraph we get the desired result on $G\setminus (I\cap J) \, /\, K$. Reinterpreting the partitions and polynomials on $G$ directly gives the lemma.
\end{proof}

In Lemma~\ref{lma1} and throughout we are using the usual graph theory notions of edge deletion and edge contraction denoted $\backslash$ and $/$ respectively.  In particular, when $S$ is a subset of vertices, $G \setminus S$ refers to $G$ with all edges in $S$ removed. 
However, in some other references, including \cite{BrY}, if edge deletion results in isolated vertices then those isolated vertices are removed.  Here we are not using that convention, but note that the choice of convention regarding isolated vertices does not change the lemma or the results used in its proof. All results from \cite{BrY} can be adapted for either convention since in a spanning forest polynomial an isolated vertex must be a singleton tree in every forest contributing to the forest polynomial, so isolated vertices must appear as singleton parts in the partition and otherwise have no effect on the polynomial.

As with Dodgson polynomials, we suppress the graph subscript from the notation when obvious from context and assume all needed edges have been deleted.

The signs in Lemma~\ref{lma1} are worth some discussion.
Proposition 12 of \cite{BrY} gives an explicit way to calculate the signs of the spanning forest polynomials in the sum. However, if the sum in the lemma contains only a single spanning forest polynomial, then the only sign is the overall sign. Since we are ultimately interested in counting points on products of such polynomials, the overall sign is irrelevant.


The examples we require are the following: 

\begin{figure}
    \centering
    \includegraphics{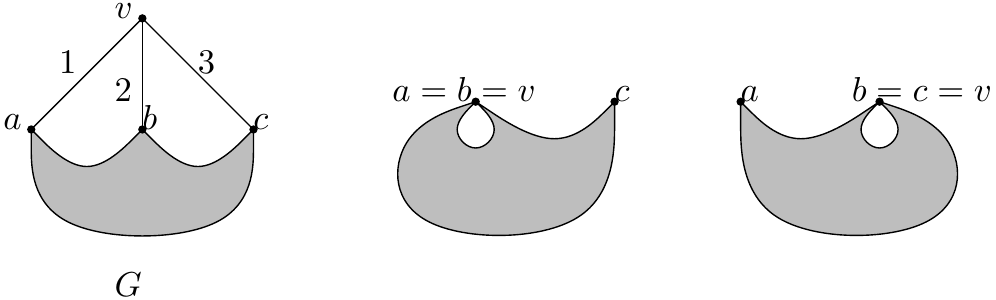}
    \caption{An example primitive divergent graph $G$ with two of its minors as discussed in Example~\ref{3-inv} }\label{fig eg G}
\end{figure}

\begin{example}\label{3-inv}
  Suppose we have a graph $G$ with a 3-valent vertex and with edge and vertex labels as in the first graph in Figure~\ref{fig eg G}. We want to find an expression for $\Psi^{12,23}_G$ and $\Psi^{1,3}_{G,2}$ using Lemma~\ref{lma1}.

By Lemma \ref{lma1}, to find the expression for Dodgson polynomial $\Psi^{12, 23}_G$ in terms of spanning forest polynomials we need to look for sets of edges of $G\backslash 123$ which give a spanning tree in $G\backslash 12/3$ and in $G\backslash 23/1$.  These smaller graphs are each isomorphic to $G-v$ and so we obtain
  \[
  \Psi^{12,23}_G = \pm \Psi_{G-v}. 
  \]
  Note that $v$ is an isolated vertex in $G\backslash 123$ (taking the graph theory convention on edge deletion), so we could also write $\Psi_{G-v}$ as $\Phi^{\{v\}, \{a,b,c\}}_{G\backslash 123}$, which is the form that would come directly from Lemma \ref{lma1}. In what follows we remove isolated vertices from the graph, as well as the singleton part from the partition corresponding to the vertex without comment whenever it is convenient.
  
  Since this application of Lemma~\ref{lma1} involves only one spanning forest polynomial in the sum on the right-hand side, the only possible sign is the overall sign which will not be relevant for our (next) desired result, Lemma~\ref{lma2}.
  
  Similarly, to find the expression for the Dodgson polynomial $\Psi^{1,3}_{G,2}$ in terms of spanning forest polynomials we need to look for sets of edges of $G \backslash 123$ which give a spanning tree in $G\backslash 3/12$ and in $G\backslash 1/23$.  These two smaller graphs are the second and third graphs in Figure~\ref{fig eg G} respectively.  If a set of edges is a tree for both of these graphs then on $G\backslash 123$ we must have $a$ and $b$ in different trees as well as $b$ and $c$ in different trees to avoid a cycle. The vertex $v$ must also be in its own tree. Furthermore, to get a tree in $G\backslash 3/12$ and in $G\backslash 1/23$ the edges viewed in $G\backslash 123$ must give a spanning forest with exactly three trees. There is one partition of $\{v, a, b, c\}$ which satisfies these properties, namely $\{v\},\{b\},\{a,c\}$, such that all forests compatible with this partition give trees in the two minors of Figure~\ref{fig eg G} as required. Therefore
  \[
  \Psi^{1,3}_{G,2} = \pm \Phi^{v, b, ac}_{G\backslash 123} = \pm \Phi^{b, ac}_{G-v}
  \]
  where the last equality holds because the vertex $v$ is isolated in $G\backslash 123$.  Again there is only one spanning forest polynomial in the application of Lemma~\ref{lma1} and so the only sign is the overall sign.
\end{example}

The calculations of Example~\ref{3-inv} will be very useful, so let us encapsulate them into the following lemma.
\begin{lemma}\label{lem 3-inv}
Let $G$ have 3-valent vertex $v$ with incident edges $1$, $2$, and $3$ whose other vertices are $a$, $b$, and $c$ respectively, as in the first graph in Figure~\ref{fig eg G}.  Then
\begin{align*}
    \Psi^{12,23}_G & = \pm \Psi_{G-v} \\
    \Psi^{1,3}_{G,2} & = \pm \Phi^{b, ac}_{G-v}
\end{align*}
Furthermore, since the overall sign does not affect the point count, $[\Psi^{1,3}_{G,2}\Psi^{12,23}_G]_q = [\Phi^{b, ac}_{G-v}\Psi_{G-v}]_q$ for any prime power $q$.
\end{lemma}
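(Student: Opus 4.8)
The plan is to obtain both polynomial identities as direct specializations of Lemma~\ref{lma1}, carrying out precisely the partition bookkeeping of Example~\ref{3-inv}, and then to read off the point-count identity from the fact that a polynomial and its negative define the same vanishing locus. Since the first part of the lemma merely encapsulates Example~\ref{3-inv}, the real work is organizing the two partition analyses and verifying that each sum collapses to a single spanning forest polynomial.

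For the first identity I would apply Lemma~\ref{lma1} with $I=\{1,2\}$, $J=\{2,3\}$ and $K=\emptyset$. As $I\cap J=\{2\}$, the partitions in play are partitions of the endpoints of edges $1$ and $3$, and a compatible forest of $G\setminus 123$ must become a spanning tree in both $G\setminus 12\,/\,3$ and $G\setminus 23\,/\,1$. The key observation is that contracting edge $3$ (respectively edge $1$) identifies the $3$-valent vertex $v$ with $c$ (respectively $a$), and after the other two incident edges are deleted each minor is isomorphic to the decompletion $G-v$. Hence the only surviving partition leaves $v$ isolated and places $a,b,c$ in one tree, giving $\Phi^{\{v\},\{a,b,c\}}_{G\setminus 123}=\Psi_{G-v}$ up to sign once the isolated $v$ is discarded.

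For the second identity I would apply Lemma~\ref{lma1} with $I=\{1\}$, $J=\{3\}$ and $K=\{2\}$, so the relevant endpoints are $\{v,a,b,c\}$ and the two minors are $G\setminus 3\,/\,12$ and $G\setminus 1\,/\,23$. Here I would pin down the partition by combining two acyclicity constraints with a connectivity count: identifying $v,a,b$ in $G\setminus 3\,/\,12$ forces $a$ and $b$ into different parts, identifying $v,b,c$ in $G\setminus 1\,/\,23$ forces $b$ and $c$ into different parts, and the spanning requirement forces the forest to have exactly three trees with $v$ a singleton. Enumerating the two-part partitions of $\{a,b,c\}$ meeting both separation conditions leaves only $\{b\},\{a,c\}$, so the sum again reduces to one term, $\Phi^{\{v\},\{b\},\{a,c\}}_{G\setminus 123}=\Phi^{b,ac}_{G-v}$, up to sign.

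Finally the point-count identity is formal: writing $\Psi^{1,3}_{G,2}=\varepsilon_1\Phi^{b,ac}_{G-v}$ and $\Psi^{12,23}_G=\varepsilon_2\Psi_{G-v}$ with $\varepsilon_1,\varepsilon_2\in\{\pm1\}$, the product is $\varepsilon_1\varepsilon_2\,\Phi^{b,ac}_{G-v}\Psi_{G-v}$, and scaling a polynomial by $-1$ leaves its zero set over $\FF_q$ unchanged, so $[\Psi^{1,3}_{G,2}\Psi^{12,23}_G]_q=[\Phi^{b,ac}_{G-v}\Psi_{G-v}]_q$. I expect the one genuinely delicate step to be the \emph{completeness} of the partition analysis in the second identity: establishing that exactly one partition survives both constraints is what guarantees a single spanning forest polynomial and hence only an overall sign, which in turn is what makes the point count insensitive to the sign ambiguity of Lemma~\ref{lma1}. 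The handling of the isolated vertex $v$ and the deletion convention is the other spot to watch, but by the discussion following Lemma~\ref{lma1} it does not affect the polynomials.
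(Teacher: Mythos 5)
Your proposal is correct and follows essentially the same route as the paper, whose proof of this lemma is exactly the calculation of Example~\ref{3-inv}: apply Lemma~\ref{lma1} with the stated $I,J,K$, observe that both minors for the first identity are isomorphic to $G-v$, show that only the partition $\{v\},\{b\},\{a,c\}$ survives the two separation constraints for the second identity, and note that a single surviving spanning forest polynomial means only an overall sign, which cannot affect the point count.
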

\begin{proof}
Calculations of Example~\ref{3-inv}.
\end{proof}

We can use the polynomials of Example~\ref{3-inv} to express the $c_2$ invariant as a direct point count modulo $q$, avoiding division by $q^2$ as in Definition \ref{def:c2}.
\begin{lemma}\label{lma2}
Suppose $G$ is a connected graph such that $2 + |E(G)| \leq 2|V(G)|$ and there exists a 3-valent vertex. Let $1,2,3$ be distinct edge indices of $G$ and $q$ a prime power. Then
\[
c_2^{(q)}(G) = -[\Psi^{1,3}_2 \Psi^{12,23}]_q \mod q
\]
\end{lemma}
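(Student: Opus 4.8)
The plan is to turn the defining quotient $c_2^{(q)}(G)=[\Psi_G]_q/q^2 \bmod q$ into a direct point count by computing $[\Psi_G]_q$ modulo $q^3$ and reading off the coefficient of $q^2$. The only structural inputs needed are that $\Psi_G=\det M$ is linear in each edge variable (Remark~\ref{rem linear}) and the Desnanot--Jacobi (Dodgson) determinant identity, which for the minors of $M$ gives $\Psi^{1,1}_G\Psi^{2,2}_G-\Psi^{1,2}_G\Psi^{2,1}_G=\pm\,\Psi_G\,\Psi^{12,12}_G$; since we only ever count points on products of such polynomials, all signs along the way are immaterial except the final overall one.

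First I would sum out the edge variables one at a time over $\FF_q$, writing $N=|E(G)|$. Writing $\Psi_G=\alpha_1(\alpha_2 A+B)+(\alpha_2 C+D)$ with $A=\Psi^{12,12}_G$, $B=\Psi^{1,1}_{G,2}$, $C=\Psi^{2,2}_{G,1}$, $D=\Psi_{G,12}$, the elementary count of roots of a form that is linear in one variable (one root when the leading coefficient is nonzero, $q$ roots when both coefficients vanish, none otherwise) expresses $[\Psi_G]_q$ through $q^{N-1}$, a count of $\{\alpha_2 A+B=0\}$, and $q$ times a count of the common zeros of $\alpha_2 A+B$ and $\alpha_2 C+D$. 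Summing out $\alpha_2$ the same way, the resultant of these two linear-in-$\alpha_2$ forms is $AD-BC$, which by the Dodgson identity specialized to $\alpha_1=\alpha_2=0$ equals $\mp\,\Psi^{1,2}_G\Psi^{2,1}_G$; thus the relevant common-zero count is, up to lower-order corrections, the point count of the product $\Psi^{1,2}_G\Psi^{2,1}_G$. A further elimination using the third edge, together with a second application of the Dodgson identity, recasts this in the indexed form of the statement, $-[\Psi^{1,3}_{G,2}\Psi^{12,23}_G]_q$; this is the standard Dodgson reduction of Brown and Schnetz and holds for any three distinct edges under the degree hypothesis.

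The degree hypothesis $2+|E(G)|\le 2|V(G)|$, equivalently $|E(G)|\ge 2h_G$ with $h_G$ the loop number, is exactly what makes the reduction valid: it is the threshold guaranteeing $[\Psi_G]_q\equiv 0 \pmod{q^2}$ (so that $c_2^{(q)}(G)$ is defined, as in \cite{SFq}) and ensuring that, after summing out the variables and dividing by $q^2$, the numerous terms reorganize modulo $q$ into the single Dodgson-product count with the stated sign. The existence of a $3$-valent vertex guarantees $G$ has at least three vertices, so $c_2^{(q)}(G)$ is defined, and it is the hypothesis that later lets us specialize $1,2,3$ to the edges at such a vertex and invoke Lemma~\ref{lem 3-inv}.

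The main obstacle is that the degree bound is tight, so the intermediate minor point counts are not individually divisible by the powers of $q$ one might naively want to discard: for instance $[\Psi^{12,12}_G]_q=[\Psi_{G/1/2}]_q$ involves a minor with two fewer edges but the same loop number $h_G$, so it need not be divisible even by $q^2$. One must therefore show that these terms recombine---rather than vanish termwise---to leave exactly the claimed count modulo $q$, which requires careful tracking of the degenerate strata where leading coefficients vanish (these carry the extra factors of $q$) and a verification that the indices match $1,2,3$ after the Dodgson identity. Everything is uniform in $q=p^s$, since root counting of linear forms over $\FF_q$ does not care whether $q$ is prime; the genuinely mod-$p$ phenomenon enters only at the later coefficient-extraction step discussed after Lemma~\ref{lma3}.
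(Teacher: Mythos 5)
The paper does not prove this lemma from first principles: its proof is a two-sentence citation, invoking Lemma 24 of \cite{BrS} (which establishes the identity at the level of the Grothendieck ring when the three edges meet at a 3-valent vertex) and Corollary 28 of \cite{BrS} (which shows the resulting point count modulo $q$ is independent of the choice of three edges). Your strategy --- eliminating one linear variable at a time over $\FF_q$ and using the Desnanot--Jacobi identity to recognize the resultants as products of Dodgson polynomials --- is precisely the denominator-reduction mechanism by which Brown and Schnetz prove those cited results, so you have identified the right route.

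As a proof, however, your proposal has a genuine gap, and you name it yourself without closing it: the assertion that the ``numerous terms reorganize modulo $q$ into the single Dodgson-product count.'' After the first elimination you are left not only with the resultant locus but also with terms such as $q^{N-1}$, the count of $\{\Psi^{1,1}=0\}$, and $q$ times the count of the degenerate stratum where all four coefficients $A,B,C,D$ vanish; the second and third eliminations produce further strata and further counts of individual Dodgson polynomials. Proving that all of these contributions are divisible by $q^3$ --- so that only $-q^2[\Psi^{1,3}_2\Psi^{12,23}]_q$ survives --- is the entire content of the cited results, and it rests on Chevalley--Warning-type degree bounds driven by the hypothesis $2+|E(G)|\le 2|V(G)|$; none of that bookkeeping appears in your sketch. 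A secondary issue is that you treat the 3-valent vertex as needed only for well-definedness and for later applications, whereas in the actual argument it is the anchor for the explicit identity of Lemma 24 of \cite{BrS}, after which Corollary 28 transfers the count to an arbitrary triple of edges; carrying out the elimination directly for three arbitrary edges would additionally require you to prove that independence statement. In short: correct outline of the Brown--Schnetz argument the paper cites, but with its hardest step left as an acknowledged obstacle rather than a proof.
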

\begin{proof}
When edges $1,2,3$ meet at a 3-valent vertex, Lemma 24 of \cite{BrS} (along with an elementary application of Euler's formula relating the number of edges, vertices, and dimension of the cycle space) gives the result at the level of the Grothendieck ring.  Passing to point counts, Corollary 28 of \cite{BrS} tells us that any choice of three edges has the same point count modulo $q$, proving the lemma.  The exact statement of the lemma also appears as an unnumbered comment immediately after the proof of Corollary 28 in \cite{BrS}.
\end{proof}
Note that any connected graph which is the result of decompleting a 4-regular graph satisfies the hypotheses since for such a graph $|E(G)| = 2(|V(G)|-1)$.  In particular, any primitive divergent graph $G$ is a decompletion of a connected 4-regular graph and so satisfies the hypotheses of the lemma.

Lemma~\ref{lma2} is a special
case of a much more general result that says that the denominators obtained by applying Brown’s
denominator reduction algorithm \cite{Brbig} to the Feynman period can each be used to
calculate the $c_2$ invariant, see Theorem 29 and Corollary 28 of \cite{BrS}.

Finally, the next result allows us to express the point-count modulo $p$ as a coefficient.
\begin{lemma}\label{lma3}
Let $q = p^s$ for $p$ prime. Let $F \in \mathbb{Z}[x_1, \ldots x_n]$ be a polynomial of degree $n$ in $n$ variables with integer coefficients. Then, the coefficient of $(x_1 \ldots x_n)^{q-1}$ in $F^{q-1}$ is $(-1)^{n+1}[F]_q$ modulo $p$. 
\end{lemma}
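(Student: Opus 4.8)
The plan is to compute the single sum $S = \sum_{x \in \FF_q^{\,n}} F(x)^{q-1}$ in two different ways and compare the two answers modulo $p$. The whole result is a congruence mod $p$ precisely because this evaluation lands in the prime subfield $\FF_p$, which matches the remark in the introduction that the method is intrinsically modulo $p$ and not $q$.

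For the point-counting side I would invoke Fermat's little theorem in $\FF_q$: for each $x \in \FF_q^{\,n}$ we have $F(x)^{q-1} = 1$ when $F(x) \neq 0$ and $F(x)^{q-1} = 0$ when $F(x) = 0$ (the convention $0^{q-1}=0$ is fine since $q-1 \geq 1$). Hence $S$ counts exactly the points at which $F$ does not vanish, so $S$ is the image in $\FF_q$ of the integer $q^n - [F]_q$. Since this integer lies in the prime subfield and $q = p^s \equiv 0 \pmod p$, I get $S \equiv -[F]_q \pmod p$.

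For the coefficient side, write $F^{q-1} = \sum_{\vec a} c_{\vec a}\, x_1^{a_1}\cdots x_n^{a_n}$ with $c_{\vec a} \in \mathbb Z$, so that $S = \sum_{\vec a} c_{\vec a} \prod_{i=1}^n \bigl(\sum_{t \in \FF_q} t^{a_i}\bigr)$. The key input is the standard power-sum identity: because $\FF_q^{\times}$ is cyclic of order $q-1$, one has $\sum_{t \in \FF_q} t^{a} = -1$ in $\FF_q$ when $a$ is a positive multiple of $q-1$ and $\sum_{t \in \FF_q} t^{a} = 0$ otherwise (the case $a=0$ gives $q \equiv 0$). Thus a monomial $x^{\vec a}$ survives modulo $p$ only if every exponent $a_i$ is a positive multiple of $q-1$, so $a_i \geq q-1$ for all $i$, and each such surviving monomial contributes $(-1)^n c_{\vec a}$.

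The crux is then a degree count, and this is the step I expect to carry the real content. Since $\deg F = n$, every monomial of $F^{q-1}$ has total degree at most $n(q-1)$, that is $\sum_i a_i \leq n(q-1)$. Combined with the constraint $a_i \geq q-1$ for every surviving monomial, the only possibility is $a_i = q-1$ for all $i$, so the unique surviving monomial is $(x_1\cdots x_n)^{q-1}$. Writing $c$ for its coefficient gives $S \equiv (-1)^n c \pmod p$. Equating the two evaluations yields $(-1)^n c \equiv -[F]_q$, hence $c \equiv (-1)^{n+1}[F]_q \pmod p$, as claimed. Fermat's little theorem and the power-sum identity are routine; it is the pigeonhole/degree argument pinning down the single surviving monomial that does the work.
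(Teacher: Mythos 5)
Your proof is correct and is essentially the paper's own argument: the paper does not write out a proof but defers to ``one of the standard proofs of the Chevalley--Warning theorem,'' which is precisely the double evaluation of $\sum_{x\in\FF_q^n}F(x)^{q-1}$ via Fermat's little theorem on one side and the power-sum identity plus the degree count on the other, exactly as you have written it. Your closing observation that the argument only yields a congruence modulo $p$ (not $q$) also matches the paper's own remark following the lemma.
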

This is a consequence of one of the standard proofs of the Chevalley-Warning theorem.  See \cite{Sgeometries} for a discussion and proof in slightly different notation. Note that this lemma only holds modulo $p$, not modulo $q$. For example, following \cite{Sgeometries}, if $F=2x$ and $q=9$ then we see that $[F]_9 = 1$ but the coefficient of $x^8$ in $F^8$ is 256 which is congruent to 1 modulo 3, but is congruent to 4 modulo 9.  The appendix to \cite{Sgeometries} gives an extension of a related result to all powers of primes, but that result only applies in the case where the degrees force a zero result, so it is not useful for the present purposes.

Since Lemma~\ref{lma3} is needed to obtain our combinatorial approach for calculating $c_2$, our approach cannot tackle the full Conjecture~\ref{oliver}, a result modulo $q$. Rather, our Theorem~\ref{thm1}, a result modulo $p$, is the best obtainable result by the combinatorial approach.



We will now use square brackets $[\, \cdot \, ]$ as the coefficient extraction operator, as is standard in enumerative combinatorics. Specifically, $[m]F$ refers to the coefficient of monomial $m$ in the polynomial $F$. This should not cause any confusion with the point counting function which we largely no longer need in view of Lemma~\ref{lma3}.

For any three edges $1$, $2$, and $3$ of $G$, let us observe a few facts about $\Psi^{1,3}_2$ and $\Psi^{12, 23}$.  As noted in Remark~\ref{rem linear}, they are both linear in all of their variables.  Furthermore, the variables $\alpha_1$, $\alpha_2$, and $\alpha_3$ do not appear in either of them.  This follows from the definition of the Dodgson polynomials since for $\Psi^{1,3}_2$ and $\Psi^{12, 23}$ and for each $1\leq i \leq 3$, either the row or the column (or both) containing $\alpha_i$ is removed, or $\alpha_i$ is explicitly set to $0$.  Finally, since $G$ is primitive divergent, $\Psi^{1,3}_2$ has degree $|E(G)|/2 -1$ and  $\Psi^{12,23}$ has degree $|E(G)|/2-2$ as is observed in the proof of Corollary 28 of \cite{BrS} and hence the degree of $\Psi^{1,3}_2 \Psi^{12,23}$ is $|E(G)| - 3$.  This can also be seen from Lemma~\ref{lma1} using the fact that contracting an edge decreases the size of any spanning tree by one and deleting an edge does not affect the size of a spanning tree.


Combining Lemma~\ref{lma2} and Lemma~\ref{lma3}, with $|E(G)| = N$ and $n=N-3$,
\[
c_2^{(q)}(G) = -(-1)^{(|E(G)|-3)+1}[(\alpha_4 \ldots \alpha_N)^{q-1}](\Psi^{1,3}_2 \Psi^{12,23})^{q-1} \mod p 
\]
Using Lemma~\ref{lem 3-inv} when edges $1$,$2$, and $3$ are configured as in Figure~\ref{fig eg G} we get, 
\begin{align}
    c_2^{(q)}(G) = -[(\alpha_4 \ldots \alpha_N)^{q-1}](\Psi_H \Phi_H^{b,ac})^{q-1} \mod p \label{eq:c2q_finalform}
\end{align}
where $H=G-v$.  Note that this coefficient modulo $p$ corresponds to the number of ways (modulo $p$) to partition $q-1$ copies of the variables $\alpha_4, \ldots, \alpha_N$ into $q-1$ monomials in $\Psi_H$ and $q-1$ monomials in $\Phi_H^{b,ac}$ such that when the $2(q-1)$ monomials are multiplied together we get $(\alpha_4 \ldots \alpha_N)^{q-1}$; this is due to the facts that $\Psi_H$ and $\Phi_H^{b,ac}$ are linear in each variable (see Remark~\ref{rem linear}) and have all coefficients in $\{0,1\}$. Furthermore, each variable $\alpha_e$ corresponds to an edge, each term of $\Psi_H$ corresponds to a spanning tree, and each term of $\Phi_H^{b,ac}$ corresponds to a forest compatible with $P$, where the edges of the tree or forest are given by the complement of the corresponding monomial. Thus, by taking the complement of each monomial, the number of ways to partition into monomials as described above is the same as to the number of ways to partition $q-1$ copies of the edges into $q-1$ spanning trees and $q-1$ spanning forests compatible with the partition $b, ac$. 

Thus, the $c_2$ calculation at $q$ modulo $p$ simplifies to
\begin{align*}
c_2^{(q)}(G) = -(&\text{number of partitions of $q-1$ copies of the edges of $H$} \\
&\text{into $q-1$ spanning trees of $H$ and $q-1$ spanning} \\
&\text{forests of $H$ compatible with the partition $b, ac$} ) \mod p
\end{align*}

The case for $p = 2$ is particularly nice, we only need to count the parity of the number of bipartitions of the edges.
Similarly, for powers of two, we still get the parity condition, but now have $2(q-1)$ parts. The complexity of the calculation grows with higher primes and subsequently prime powers.

\begin{remark}\label{rem complementary}
By taking the complement of each set of edges or of each monomial, we can move between interpreting the $c_2$ invariant as counting partitions of $q-1$ copies of the variables into monomials in $\Psi_H$ and $\Phi^{b,ac}_H$ and counting partitions of $q-1$ copies of the edges into spanning trees and spanning forests.

Being combinatorially minded, we find it easier to think in terms of partitioning the edges into trees and forests, and this perspective was very useful in previous work including \cite{Yscompl, HYcompletion} where edge swapping arguments on the trees and forests were used to prove that certain $c_2$ invariants agreed.

A more algebraically minded reader might prefer to think in terms of the monomials directly.  Since our main result can be phrased in terms of suitable polynomials which do not necessarily come from graphs, see Proposition~\ref{prop main prop}, the proofs of the next section will be phrased in terms of partitioning variables, but the reader is encouraged, as we do, to think complementarily of the edge partitioning.

\end{remark}

\section{Main result}

We are now ready to give our main result.

\begin{theorem}\label{thm1}
Let $q = p^s$ for $s \geq 1$, $p$ prime. Let $G$ be primitive divergent. Then, $c^{(q)}_2(G) \equiv (-1)^{s+1} \big(c^{(p)}_2(G)\big)^s \bmod p$.\\
\end{theorem}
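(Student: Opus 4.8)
The plan is to work entirely with the coefficient form of the $c_2$ invariant established in Equation~\eqref{eq:c2q_finalform}, and to prove the identity first as a purely algebraic statement about products of multilinear polynomials, which is the content I would isolate as Proposition~\ref{prop main prop}. Set $N=|E(G)|$, $n=N-3$, write $x_1,\dots,x_n$ for the variables $\alpha_4,\dots,\alpha_N$, and let $F=\Psi_H\Phi_H^{b,ac}$ with $H=G-v$. By Equation~\eqref{eq:c2q_finalform}, $c_2^{(q)}(G)\equiv -\,[(x_1\cdots x_n)^{q-1}]\,F^{q-1}\bmod p$ for every $q=p^s$, and in particular $c_2^{(p)}(G)\equiv -\,[(x_1\cdots x_n)^{p-1}]\,F^{p-1}\bmod p$. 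The whole theorem then reduces to the single coefficient congruence
\[
[(x_1\cdots x_n)^{q-1}]\,F^{q-1}\equiv\big([(x_1\cdots x_n)^{p-1}]\,F^{p-1}\big)^{s}\bmod p,
\]
after which the claimed sign drops out of the two leading minus signs, since writing $c=c_2^{(p)}(G)$ one has $-(-c)^{s}=(-1)^{s+1}c^{s}$.

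First I would split the power using Frobenius. Writing $p^{s}-1=\sum_{k=0}^{s-1}(p-1)p^{k}$ in base $p$, and using that over $\FF_p$ one has $W(x_1,\dots,x_n)^{p}=W(x_1^{p},\dots,x_n^{p})$ for any polynomial $W$ (the coefficients being fixed by $c\mapsto c^{p}$), I obtain, working in $\FF_p[x_1,\dots,x_n]$,
\[
F^{q-1}=\prod_{k=0}^{s-1}\big(F^{p-1}\big)^{p^{k}}=\prod_{k=0}^{s-1}W\big(x_1^{p^{k}},\dots,x_n^{p^{k}}\big),\qquad W:=F^{p-1}.
\]
Extracting the coefficient of $(x_1\cdots x_n)^{q-1}$ then amounts to choosing, for each $k$, a monomial $x^{m_k}$ of $W$ so that $\sum_{k=0}^{s-1}p^{k}m_k=(p^{s}-1)(1,\dots,1)$, and the coefficient is the sum over all such choices of $\prod_k [x^{m_k}]W$.

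The heart of the argument, and the step I expect to be the main obstacle, is showing that the only admissible choice is the diagonal $m_k=(p-1,\dots,p-1)$ for every $k$. This is where the special shape of $F$ enters: since $\Psi_H$ and $\Phi_H^{b,ac}$ are each linear in every variable (Remark~\ref{rem linear}), their product $F$ has degree at most $2$ in each variable, so $W=F^{p-1}$ has degree at most $2(p-1)$ in each variable; hence $0\le (m_k)_i\le 2(p-1)$. Fixing a variable $i$ and writing $a_k=(m_k)_i$, I would argue by descending induction from the top digit: from $\sum_k a_k p^{k}=p^{s}-1$ with $a_k\le 2(p-1)$ one gets $a_{s-1}\le p-1$ (as $a_{s-1}p^{s-1}\le p^{s}-1<p^{s}$) and $a_{s-1}\ge p-1$ (as the remaining digits contribute at most $2(p-1)\tfrac{p^{s-1}-1}{p-1}=2(p^{s-1}-1)$), forcing $a_{s-1}=p-1$; subtracting $(p-1)p^{s-1}$ and inducting gives $a_k=p-1$ for all $k$. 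It is precisely the bound $2(p-1)$ that produces this rigidity, which is why the hypothesis that $F$ is a product of two multilinear polynomials cannot be relaxed to per-variable degree $3$ (where the top-digit pinch already fails).

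With uniqueness of the diagonal in hand, the surviving contribution is $\prod_{k=0}^{s-1}[(x_1\cdots x_n)^{p-1}]W=\big([(x_1\cdots x_n)^{p-1}]F^{p-1}\big)^{s}$, giving the displayed coefficient congruence; substituting the coefficient form of $c_2^{(p)}(G)$ and tracking signs yields $c_2^{(q)}(G)\equiv(-1)^{s+1}\big(c_2^{(p)}(G)\big)^{s}\bmod p$. I would state and prove the coefficient congruence as Proposition~\ref{prop main prop} for an arbitrary product $F=F_1F_2$ of two multilinear integer polynomials in $n$ variables, so that the graph-theoretic theorem follows immediately by specializing to $F=\Psi_H\Phi_H^{b,ac}$ via Equation~\eqref{eq:c2q_finalform}.
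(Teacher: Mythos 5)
Your proposal is correct, and it reaches Proposition~\ref{prop main prop} by a genuinely different route than the paper. The paper keeps $P=\Psi_H$ and $Q=\Phi^{b,ac}_H$ separate, factors each of $P^{q-1}$ and $Q^{q-1}$ into blocks $\big((P)^{p^{s-1}}\cdots P^{p}P\big)^{p-1}$, and then needs two lemmas: Lemma~\ref{lemma prop lem 1} (via Lucas' theorem) showing that a block $(P)^{p^i}$ can only receive $0$ or $p^i$ copies of each variable modulo $p$, and Lemma~\ref{lemma prop lem 2} showing the distribution across block sizes is forced by base-$p$ digits, followed by a no-carry argument for $k^{(j)}+(q-1-k^{(j)})=q-1$ to decouple the digit positions between $P$ and $Q$. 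You instead treat the product $F=PQ$ as a single object, replace the block analysis by the Frobenius identity $W^{p^k}\equiv W(x_1^{p^k},\dots,x_n^{p^k})\bmod p$ with $W=F^{p-1}$, and replace both lemmas by a single digit-rigidity computation: since $\deg_{x_i}F\le 2$, each exponent $a_k$ satisfies $a_k\le 2(p-1)$, and the representation $p^s-1=\sum_k a_k p^k$ then pins every $a_k$ to $p-1$. Your argument is shorter, avoids Lucas' theorem, does not need the hypothesis that the coefficients lie in $\{0,1\}$ (only integrality and per-variable degree at most $2$ of the product), and makes transparent why the bound $2(p-1)$ is sharp --- which is exactly the paper's closing counterexample $(1+x^8)(1+y^8)$, where the multilinearity hypothesis fails. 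What the paper's block-and-partition formulation buys is a direct match with the combinatorial interpretation it emphasizes (partitioning $q-1$ copies of the edges into spanning trees and compatible forests, cf.\ Remark~\ref{rem complementary}), which your more algebraic argument does not exhibit; the sign bookkeeping $-(-c)^s=(-1)^{s+1}c^s$ at the end is handled identically in both.
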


To prove the main result, we will first extract the key statement at the level of polynomials, Proposition~\ref{prop main prop}, which we will prove with the help of two lemmas.  Then we will give the proof of Theorem~\ref{thm1} by using the set up of the previous sections to relate the $c_2$ calculations to the result of Proposition~\ref{prop main prop}.

\begin{prop}\label{prop main prop}
Let $q = p^s$ for $s \geq 1$, $p$ prime. Let $P$ and $Q$ be polynomials $P,Q\in \mathbb{Z}[x_1, \ldots, x_N]$ which are each linear in each variable and which have every non-zero coefficient equal to $1$. Then
\[
    [(x_1\cdots x_N)^{q-1}](PQ)^{q-1} = ([(x_1\cdots x_N)^{p-1}](PQ)^{p-1})^s \mod p
\]
\end{prop}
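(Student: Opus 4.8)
The plan is to work in $\mathbb{F}_p[x_1,\dots,x_N]$ and exploit the Frobenius endomorphism together with the base-$p$ expansion $q-1=\sum_{i=0}^{s-1}(p-1)p^i$. Write $R=PQ$. Since $P$ and $Q$ are each linear in every variable, $R$ has degree at most $2$ in each variable, so $R^{p-1}$ has degree at most $2(p-1)$ in each variable; this degree bound is essentially the only property of $P,Q$ that the argument needs. First I would record two ingredients. The Frobenius identity (freshman's dream modulo $p$) gives, for any $i\ge 0$,
\[
R^{p^i}\equiv R(x_1^{p^i},\dots,x_N^{p^i})\pmod p,
\]
and the base-$p$ expansion of $q-1$ lets me factor
\[
R^{q-1}=\prod_{i=0}^{s-1}\bigl(R^{p^i}\bigr)^{p-1}\equiv\prod_{i=0}^{s-1}R(x_1^{p^i},\dots,x_N^{p^i})^{p-1}\pmod p.
\]

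Next I would extract the coefficient of $(x_1\cdots x_N)^{q-1}$ from the right-hand product. Expanding $R^{p-1}=\sum_{\mu}c_\mu x^{\mu}$, where $\mu$ runs over exponent vectors with $0\le\mu_j\le 2(p-1)$, the $i$-th factor contributes a monomial $x^{p^i\mu^{(i)}}$, so the desired coefficient is $\sum\prod_{i=0}^{s-1}c_{\mu^{(i)}}$ over all tuples $(\mu^{(0)},\dots,\mu^{(s-1)})$ satisfying, for each variable index $j$,
\[
\sum_{i=0}^{s-1}p^i\mu^{(i)}_j=\sum_{i=0}^{s-1}(p-1)p^i.
\]
The heart of the proof is a uniqueness lemma: setting $d_i=\mu^{(i)}_j-(p-1)$, the degree bound gives $|d_i|\le p-1$, and $\sum_i p^i d_i=0$ with all $|d_i|\le p-1$ forces $d_i=0$ for every $i$. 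This follows by examining the least index $k$ with $d_k\ne 0$: then $p^k d_k=-\sum_{i>k}p^i d_i$ is divisible by $p^{k+1}$, so $p\mid d_k$, contradicting $0<|d_k|\le p-1$. Hence the only contributing tuples have $\mu^{(i)}_j=p-1$ for all $i,j$, that is, each $\mu^{(i)}$ is the exponent vector of $(x_1\cdots x_N)^{p-1}$.

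Finally, for such a tuple $\prod_i c_{\mu^{(i)}}=c^{\,s}$ where $c=[(x_1\cdots x_N)^{p-1}]R^{p-1}=[(x_1\cdots x_N)^{p-1}](PQ)^{p-1}$, which yields the claimed identity modulo $p$. I expect the main obstacle to be isolating and justifying the uniqueness lemma, and in particular recognizing why the linearity hypothesis on $P$ and $Q$ is exactly what makes it work: it caps $\deg_{x_j}R$ at $2$, hence caps $|d_i|$ at $p-1$, which is precisely the window rendering the balanced base-$p$ representation of $0$ unique. If the per-variable degree of $R$ could reach $2p-1$ or more, carries would permit additional contributing tuples and the clean $s$-th power relation would break down.
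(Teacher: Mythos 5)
Your proof is correct, and it takes a genuinely different route from the one in the paper. Where the paper works combinatorially --- interpreting coefficients of $(P)^{p^i}$ as counts of ordered partitions of variables into monomials, killing the unwanted ones via Lucas' theorem on $\binom{p^i}{k}$, and then tracking base-$p$ digits of the exponents $k^{(j)}$ and $q-1-k^{(j)}$ separately for the $P$-blocks and $Q$-blocks with a no-carry argument --- you pass immediately to $\mathbb{F}_p$ and let the Frobenius identity $R^{p^i}\equiv R(x_1^{p^i},\dots,x_N^{p^i})$ do the work of Lemma~\ref{lemma prop lem 1}, then replace Lemma~\ref{lemma prop lem 2} and the no-carry discussion with a single uniqueness lemma: $\sum_i p^i d_i=0$ with $|d_i|\le p-1$ forces all $d_i=0$. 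Treating $R=PQ$ as one polynomial of per-variable degree at most $2$ is what makes this collapse possible. Your version is shorter and strictly more general --- it never uses the hypothesis that the nonzero coefficients equal $1$, only Fermat's little theorem $c^p\equiv c$, so it proves the proposition for arbitrary integer coefficients --- and it isolates cleanly why linearity is the right hypothesis (it is exactly the window in which the balanced representation of $q-1$ is unique, consistent with the paper's closing counterexample $(1+x^8)(1+y^8)$). What the paper's longer route buys is that its partition-of-variables language matches the edge-partition interpretation of the $c_2$ invariant used throughout Section~\ref{sec edge part}, whereas your argument is purely algebraic. One cosmetic remark: the paper's proof opens by disposing of the case where $PQ$ has total degree different from $N$; your argument does not need this case split, since the right-hand side $c^s$ vanishes exactly when $c$ does.
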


To prove Proposition~\ref{prop main prop} it is convenient to first observe two combinatorial lemmas.  

Both lemmas will be useful since, using the factorization $p^s - 1 = (p-1) (p^{s-1} + \ldots + p + 1)$, we can write $P^{q-1}$ as the product of factors, each with an exponent that is a power of $p$
\begin{equation}\label{eq factor P}
P^{q-1} = \big((P)^{p^{s-1}} \ldots P^{p} P\big)^{p-1}. 
\end{equation}
(and similarly for $Q$).  Consequently, it is valuable to consider the assignments of variables on parts of the form $(P)^{p_i}$.  These are particularly well behaved modulo $p$ because the exponent is a power of $p$.  

The first lemma shows that modulo $p$, we can reduce to considering assignments that assign the same variables to each copy of $P$ among a block of the form $(P)^{p^i}$.

\begin{lemma}\label{lemma prop lem 1}
Let $p$, $P$ and $Q$ be as in Proposition~\ref{prop main prop}.  Let $m$ be a monomial appearing in $(P)^{p^i}$ (respectively $(Q)^{p^i}$).  If any variable appears in $m$ with power strictly between $0$ and $p^i$ then the coefficient of $m$ in $(P)^{p^i}$ (respectively $Q^{(p)^i}$) is $ 0 \bmod p$.  If all variables appear in $m$ with power $0$ or $p^i$ then the coefficient of $m$ is $1 \bmod \, p$.
\end{lemma}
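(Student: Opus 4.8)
The plan is to prove Lemma~\ref{lemma prop lem 1} via the Frobenius/freshman's-dream identity modulo $p$, which is exactly why the exponent being a power of $p$ is the crucial hypothesis. The key algebraic fact is that for any polynomial $P$ with integer coefficients, $P^p \equiv P(x_1^p, \ldots, x_N^p) \bmod p$, and iterating this $i$ times gives $P^{p^i} \equiv P(x_1^{p^i}, \ldots, x_N^{p^i}) \bmod p$. So my first step is to establish this congruence of polynomials, either by induction on $i$ with the base case being the standard fact that the multinomial coefficients $\binom{p}{k_1, \ldots, k_m}$ vanish modulo $p$ unless one $k_j$ equals $p$, or by simply invoking the Frobenius endomorphism on $\mathbb{F}_p[x_1,\ldots,x_N]$.

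The second step is to read off the coefficients directly from this congruence. On the right-hand side $P(x_1^{p^i}, \ldots, x_N^{p^i})$, every variable $x_j$ has been replaced by $x_j^{p^i}$, so every monomial appearing has each variable raised to a power that is a multiple of $p^i$. Now I invoke the hypothesis that $P$ is linear in each variable with all nonzero coefficients equal to $1$: a monomial $m' = \prod_j x_j^{e_j}$ of $P$ has each $e_j \in \{0,1\}$, and its image under the substitution is $\prod_j x_j^{e_j p^i}$, with each exponent in $\{0, p^i\}$. Distinct monomials of $P$ (being distinct $0/1$ vectors of exponents) map to distinct monomials after substitution, so there is no collision and the coefficient $1$ is simply carried over. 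This gives precisely the stated dichotomy: a monomial $m$ of $(P)^{p^i}$ in which some variable appears with a power strictly between $0$ and $p^i$ cannot arise from the substitution and hence has coefficient $\equiv 0 \bmod p$; whereas a monomial with every variable appearing to power $0$ or $p^i$ corresponds to a unique monomial of $P$ and therefore has coefficient $\equiv 1 \bmod p$. The identical argument applies to $Q$.

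I expect the only real subtlety — and the step I would be most careful to state precisely rather than hand-wave — is the claim that the substitution $x_j \mapsto x_j^{p^i}$ induces a \emph{bijection} between the monomials of $P$ and the monomials of $P(x_1^{p^i},\ldots,x_N^{p^i})$, with no two monomials of $P$ collapsing onto the same image. This injectivity is what guarantees the surviving coefficients are exactly $1$ and not some larger integer that might vanish modulo $p$; it rests squarely on the $0/1$-exponent (multilinearity) hypothesis, since for a general polynomial the substitution can merge monomials and the conclusion would fail. Everything else is the routine Frobenius computation, so I would present the lemma as a clean two-line consequence of $P^{p^i} \equiv P(x_1^{p^i},\ldots,x_N^{p^i}) \bmod p$ together with the multilinearity hypothesis, and note explicitly that both hypotheses on $P$ (linearity in each variable and unit coefficients) are used here.
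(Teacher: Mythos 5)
Your proof is correct, but it takes a genuinely different route from the paper. The paper argues combinatorially: it interprets the coefficient of $m$ in $(P)^{p^i}$ as the number of ordered partitions of the variables of $m$ into $p^i$ monomials of $P$, shows that a variable occurring $k$ times with $0<k<p^i$ forces $\binom{p^i}{k}$ to divide that count, and invokes Lucas' theorem to kill it modulo $p$. You instead use the Frobenius identity $P^{p^i}\equiv P(x_1^{p^i},\ldots,x_N^{p^i}) \bmod p$ and read the dichotomy off from the fact that the substitution $x_j\mapsto x_j^{p^i}$ is injective on multilinear monomials. Your route is shorter and makes transparent exactly where the hypothesis ``exponent is a power of $p$'' enters; the paper's route has the advantage that it stays in the language of partitioning variables into parts, which is the framework reused in Lemma~\ref{lemma prop lem 2} and in the proof of Proposition~\ref{prop main prop}. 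One small point to tighten in the second half of your argument: the hypothesis that $m$ \emph{appears} in $(P)^{p^i}$ is a statement about its integer coefficient, while the Frobenius congruence only identifies that coefficient modulo $p$ with the coefficient of the de-substituted monomial $m'$ in $P$; a priori $m$ could have nonzero integer coefficient divisible by $p$ with $m'$ absent from $P$, which would make the answer $0\bmod p$ rather than $1$. This is ruled out by the easy observation (implicit in the paper's counting proof) that a monomial whose exponents all lie in $\{0,p^i\}$ admits only one factorization into $p^i$ multilinear monomials, namely $p^i$ copies of $m'$, so its integer coefficient is exactly $[m']P\in\{0,1\}$ and the hypothesis forces it to be $1$. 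With that sentence added your argument is complete.
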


\begin{proof}
Without loss of generality say $m$ is a monomial in $(P)^{p^i}$.

Since $P$ has all coefficients in $\{0,1\}$, the coefficient of $m$ in $(P)^{p^i}$ is the number of ways of partitioning the variables of $m$ into $p^i$ ordered parts each of which is a monomial of $P$.  

Since $P$ is linear in each variable, there are at most $p^i$ copies of any variable in $m$.
Suppose that $k$ copies of variable $x_j$ appear in $m$ with $0 < k < p^i$ and with $j$ minimal (i.e. for all $1 \leq \ell < j$, either $0$ or $p^i$ copies of $x_\ell$ appear in $m$).
Let us consider building partitions of $m$ one variable at a time.  For the first $j-1$ variables either $0$ or $p^i$ copies of the variable appear in $m$.  Since $P$ is linear in each variable, and there are $p^i$ parts, this implies that each variable either appears $0$ times in each part or exactly once in each part.  With the variables $x_1, \ldots, x_{j-1}$ now partitioned, consider how the $k$ copies of $x_j$ can be distributed among the parts. Since $P$ is linear in $x_j$ and since the parts are identical so far, there are ${p^i \choose k}$ possible ways to distribute the copies of $x_j$ and each choice differs from the others only by a permutation of the parts so far. Therefore, the number of ways to distribute the remaining variables is the same for each way of distributing the $x_j$. It follows that ${p^i \choose k}$ divides the total number of ways to partition the variables into $p^i$ monomials of $P$.


By Lucas' theorem, ${p^i \choose k} \equiv 0 \bmod p$ since $k\neq 0$ and $k\neq p^i$ proving the first part of the result.


For the second part of the result, the only way of partitioning the variables to contribute modulo $p$ is into equal parts,  each part containing one copy of each variable that appears in $m$. Each part is a monomial of $P$ and has coefficient either $0$ or $1$ in $P$ and hence $m$ has coefficient either $0$ or $1$ in $(P)^{p^i}$ correspondingly, but by assumption the coefficient of $m$ in $(P)^{p^i}$ is nonzero.
\end{proof}

The next lemma tells us that how many copies of each variable each factor $(P)^{p^i}$ receives is highly constrained.  This will be useful in the proof of Proposition~\ref{prop main prop} because it implies a kind of independence between the variable assignments to factors for different powers of $i$.

\begin{lemma}\label{lemma prop lem 2}
Let $p$, $q$, $P$, and $Q$ be as in Proposition~\ref{prop main prop}. Let $m = x_1^{k^{(1)}}\cdots x_N^{k^{(N)}}$ be a monomial in $P^{q-1}$ (respectively $Q^{q-1}$).  Let $k^{(j)}_i$ be the $i$-th digit in the base $p$ expansion of $k^{(j)}$.  Let $m_i = x_1^{k^{(1)}_i} \cdots x_N^{k^{(N)}_i}$.  Then
\[
[m]P^{q-1} \equiv \prod_{i=0}^{s-1} [m_i]((P)^{p^i})^{p-1} \bmod p
\]
(respectively with $Q$ in place of $P$).
\end{lemma}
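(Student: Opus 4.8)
The plan is to build on the factorization~\eqref{eq factor P}. Since polynomial multiplication is commutative, it rearranges to $P^{q-1} = \prod_{i=0}^{s-1}\big((P)^{p^i}\big)^{p-1}$, a product of $s$ blocks. Extracting a coefficient from a product is a convolution, so $[m]P^{q-1}$ is the sum, over all factorizations $m = \prod_{i=0}^{s-1}\mu^{(i)}$ of $m$ into one monomial $\mu^{(i)}$ per block, of $\prod_{i=0}^{s-1}[\mu^{(i)}]\big((P)^{p^i}\big)^{p-1}$. The heart of the argument is to show that, modulo $p$, only a single factorization contributes, and that it is the one dictated by the base-$p$ digits of the exponents of $m$.

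First I would record the shape each block can contribute modulo $p$. By Lemma~\ref{lemma prop lem 1}, the monomials occurring in $(P)^{p^i}$ are exactly those in which every variable has power $0$ or $p^i$; equivalently $(P)^{p^i} \equiv P(x_1^{p^i},\ldots,x_N^{p^i}) \bmod p$, whence $\big((P)^{p^i}\big)^{p-1} \equiv \big(P^{p-1}\big)(x_1^{p^i},\ldots,x_N^{p^i}) \bmod p$. Two facts follow. In any contributing factorization the power of each $x_j$ in $\mu^{(i)}$ is a multiple of $p^i$, say $p^i d^{(j)}_i$; and since $P$ is linear in each variable, $P^{p-1}$ has degree at most $p-1$ in $x_j$, so $0 \le d^{(j)}_i \le p-1$.

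Next I would invoke uniqueness of base-$p$ expansions, variable by variable. The constraint coming from $x_j$ is $\sum_{i=0}^{s-1} p^i d^{(j)}_i = k^{(j)}$ with each $d^{(j)}_i \in \{0,\ldots,p-1\}$, and because $P$ is linear in $x_j$ we have $k^{(j)} \le q-1 < p^s$, so $k^{(j)}$ has at most $s$ base-$p$ digits and the solution $d^{(j)}_i = k^{(j)}_i$ is forced. Hence the only factorization surviving modulo $p$ is $\mu^{(i)} = \prod_j x_j^{p^i k^{(j)}_i}$, giving $[m]P^{q-1} \equiv \prod_{i=0}^{s-1}\big[\prod_j x_j^{p^i k^{(j)}_i}\big]\big((P)^{p^i}\big)^{p-1} \bmod p$. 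Undoing the substitution $x_j \mapsto x_j^{p^i}$ on the $i$-th block converts its surviving contribution $\big[\prod_j x_j^{p^i k^{(j)}_i}\big]\big((P)^{p^i}\big)^{p-1}$ into the coefficient $[m_i]P^{p-1}$ of the digit-monomial $m_i$; this supplies the $i$-th factor of the claimed product, each block being read at its natural $p^i$-scaled monomial, and assembling the $s$ blocks gives the lemma. The identical argument, reading $Q$ for $P$ throughout, handles the parenthetical case.

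The step I expect to be the main obstacle is the collapse of the convolution to its single digit-respecting term, i.e.\ showing that all cross terms vanish modulo $p$. It is precisely here that the three ingredients must interlock: Lemma~\ref{lemma prop lem 1} forces each block's exponents onto multiples of $p^i$, the linearity of $P$ caps the associated multiples below $p$, and uniqueness of base-$p$ representation eliminates every other distribution. The delicate points to verify are that no carries arise between blocks and that the range $i=0,\ldots,s-1$ exactly exhausts the digits of each $k^{(j)}$, so that the product on the right has precisely the right number of factors.
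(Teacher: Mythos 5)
Your proof is correct and follows essentially the same route as the paper's: expand $[m]P^{q-1}$ as a convolution over the blocks $\big((P)^{p^i}\big)^{p-1}$, use Lemma~\ref{lemma prop lem 1} to force each block's exponents onto multiples of $p^i$ with multiplier at most $p-1$ (the paper phrases this as counting the number of size-$p^i$ blocks containing each variable), and invoke uniqueness of base-$p$ expansions to kill every cross term. Your explicit rescaling $x_j\mapsto x_j^{p^i}$ at the end is a slightly more careful reading of the right-hand side, whose coefficient extraction is indeed meant at the $p^i$-scaled monomial, but the substance of the argument is the same.
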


\begin{proof}
Recall \eqref{eq factor P}:
\[
P^{q-1} = \big((P)^{p^{s-1}} \ldots P^{p} P\big)^{p-1}. 
\]
{}From this we can immediately write $[m]P^{q-1}$ as a sum of products of $[m_i']((P)^{p^i})^{p-1}$ where the sum runs over all ways to partition $m$ into monomials $m_i'$.  The only thing to prove, then, is that modulo $p$ the only partition that contributes is the one given by the $m_i$'s defined by the base $p$ expansions.

Now consider a monomial $m_i'$ that appears via the term $[m_i']((P)^{p^i})^{p-1}$ and how it can be partitioned among each of the $(p-1)$ factors of the form $(P)^{p^i}$.  Call each factor $(P)^{p^i}$ a \textit{block} of size $p^i$.
By Lemma~\ref{lemma prop lem 1}, in order for $m_i'$ to contribute modulo $p$, in each block of size $p^i$ each variable either appears $p^i$ times or not at all. Let $\ell^{(j)}_i \in [0,p-1]$ be the number of these blocks in which variable $x_j$ appears. Then the blocks of $P$ of size $p^i$ are assigned $p^i \ell^{(j)}_i$ copies of $x_j$ such that $k^{(j)} = \sum_{i=0}^{s-1} p^i\ell^{(j)}_i$. Therefore the $\ell^{(j)}_i$'s are exactly the unique base $p$ expansion of $k^{(j)}$: $\ell^{(j)}_i = k^{(j)}_i$.
Thus the only term contributing modulo $p$ is the term given by the base $p$ expansions, giving the statement of the lemma.
\end{proof}

\begin{proof}[Proof of Proposition~\ref{prop main prop}]
If the total degree of $PQ$ is not $N$ then the statement holds with $0$ on each side.  Assume now that the total degree of $PQ$ is $N$.

Since $(PQ)^{p-1}$ is of total degree $N(p-1)$ and $P$ and $Q$ are both linear in each variable with all coefficients in $\{0,1\}$, the coefficient of $(x_1\cdots x_N)^{p-1}$ in $(PQ)^{p-1}$ is the number of ways to partition $p-1$ copies of each variable into $2p-2$ ordered parts such that the first $p-1$ parts each give a monomial of $P$ and the second $p-1$ parts each give a monomial of $Q$.   We will return to this fact later in the proof.

Now consider $q$.  As in the proof of the previous lemma, using \eqref{eq factor P} and the analogous equation for $Q$, we can write $(PQ)^{q-1}$ as the product of factors, each with an exponent that is a power of $p$:
\[
P^{q-1}  Q^{q-1} = \big((P)^{p^{s-1}} \ldots P^{p} P\big)^{p-1} \big((Q)^{p^{s-1}} \ldots Q^{p} Q\big)^{p-1}
\]
Using this factorization, the coefficient of $(x_1\cdots x_N)^{q-1}$ in $(PQ)^{q-1}$ can be computed by partitioning $q-1$ copies of each variable into $2s$ ordered parts such that the the $i$-th part is a monomial in $(P)^{p^{s-i}}$ for $1\leq i\leq s$ and is a monomial in $(Q)^{p^{s-(i-s)}}$ for $s < i \leq 2s$, and then taking the product of the coefficients of these monomials in their corresponding power of $P$ or $Q$.

As in the proof of the previous lemma we refer to each factor of the form $(Q)^{p^i}$ or $(P)^{p^i}$ in the expression above as a \textit{block} of size $p^i$. For a given partition of $q-1$ copies of the variables, we say the variables for the part corresponding to this block are \textit{assigned} to the block.  Since we are taking a product of the coefficients we only need to consider assignments for which the coefficients of the assigned monomials are nonzero modulo $p$, and so from now on we restrict ourselves to such assignments.

As a consequence, since $P$ and $Q$ are linear in each variable, a block of size $p^i$ can be assigned between 0 and $p^i$ copies of any variable, and then by Lemma~\ref{lemma prop lem 1} each block of size $p^i$ can be assigned either exactly $0$ or exactly $p^i$ copies of each variable.  

Suppose we assign exactly $k^{(j)}$ copies of $x_j$ to $P^{q-1}$.  Then $q-1-k^{(j)}$ copies of $x_j$ are assigned to $Q^{q-1}$.   By Lemma~\ref{lemma prop lem 2} the variables must be assigned among the blocks of $P$ and the blocks of $Q$ by the base $p$ expansions of the $k^{(j)}$ and the $q-1-k^{(j)}$ respectively.

For each $j$, from $k^{(j)} + (q-1-k^{(j)}) = q-1 = (p-1)(p^{s-1} + \ldots + p + 1)$, we see that the base $p$ expansions of $k^{(j)}$ and $(q-1-k^{(j)})$ sum to the base $p$ expansion $\lambda \ldots \lambda$ where $\lambda = p-1$, and it follows that there is no carry-over when adding the $i$-th base $p$ digits of $k^{(j)}$ and $q-1-k^{(j)}$.  This can be seen inductively: in adding the $0$-th digits in order to obtain $\lambda$ there can be no carry, and continue likewise.  Consequently, the $i$-th digit of $q-1-k^{(j)}$ is $\lambda - k^{(j)}_i$. Because assignments on blocks of size $p^i$ are determined by the $i$-th base $p$ digit of $k^{(j)}$, it follows that assignments on blocks of size $p^i$ are independent of assignments on blocks of size $p^j$ for $i \neq j$ and there are exactly $p^i (p-1)$ copies of each variable between all $2(p-1)$ blocks of size $p^i$.

For each $i$, by Lemma~\ref{lemma prop lem 1}, since we can treat the $p^i$ copies of a variable as a unit when assigning to blocks of size $p^i$, counting the number of ways to assign each variable to the blocks of size $p^i$ is simply counting the number of ways to choose which of the $2(p-1)$ blocks get each variable where each variable is assigned to $p-1$ blocks.  This is the same as the number of ways of assigning $p-1$ copies of each variable between the $2(p-1)$ parts in $(PQ)^{p-1}$  --- exactly as discussed at the beginning of this proof in the second paragraph.   By Lemma~\ref{lemma prop lem 2} and the discussion of the previous paragraph, this assignment can be done independently for all $s$ pairs $((P)^{p^i} (Q)^{p^i})^{p-1}$, and the coefficients match modulo $p$ by the last part of Lemma~\ref{lemma prop lem 1} so it follows that
\[
   [(x_1\cdots x_n)^{q-1}](PQ)^{q-1} = ([(x_1\cdots x_n)^{p-1}](PQ)^{p-1})^s \mod p
\]
as desired.
\end{proof}

\begin{proof}[Proof of Theorem~\ref{thm1}] 
As described in the previous section, since $G$ is the decompletion of a 4-regular graph, $|E(G)|$ is even and there exists a vertex of degree 3, say $v$ with incident edges $1,2,3$ and neighbouring vertices $a$, $b$, $c$ as in Figure~\ref{fig eg G}. Then by Lemmas~\ref{lma1}, \ref{lem 3-inv}, \ref{lma2}, \ref{lma3} as described in Section~\ref{sec edge part} we obtain
\[
    c_2^{(q')}(G)  = -[(\alpha_{4}\cdots \alpha_{N})^{q'-1}](\Phi^{b,ac}_{H} \Psi_H)^{q'-1} \bmod p
\]
where $H=G-v$ for both $q' = p, q$ and where $|E(G)|=N$.  $\Phi^{b,ac}_H$ and  $\Psi_H$  are both linear in each variable and have all coefficients $0$ or $1$ by construction (see Remark~\ref{rem linear}) so Proposition~\ref{prop main prop} and Equation~\ref{eq:c2q_finalform} give 
\begin{align*}
    c_2^{(q)}(G) & =  -[(\alpha_{4}\cdots \alpha_{N})^{q-1}](\Phi^{b,ac}_{H} \Psi_H)^{q-1} \bmod p \\
    & = -([(\alpha_{4}\cdots \alpha_{N})^{p-1}](\Phi^{b,ac}_{H} \Psi_H)^{p-1})^s \bmod p \\
    & = -(-c_2^{(p)}(G))^s \bmod p\\
    & = (-1)^{s+1}(c_2^{(p)}(G))^s \bmod p
\end{align*}
proving the Theorem.
\end{proof}

In particular, this implies that $c^{(q)}_2(G) \equiv c^{(p)}_2(G) \bmod p$ for $p = 2$ and for $s = p$ by Fermat's Little Theorem.

By Theorem~\ref{thm1}, we see that the relation conjectured by Oliver Schnetz (Conjecture~\ref{oliver}) holds modulo $p$. It remains as future work to show or disprove that it holds modulo $q$.  Different techniques will need to be used to do so since the use of Lemma~\ref{lma3} fundamentally restricts us to working modulo $p$. 

Note that Proposition~\ref{prop main prop} does not hold for all polynomials $F=PQ$ as if we take $F=(1+x^8)(1+y^8)$ then with $p=3$, $q=9$ we get $[x^8y^8]F^8=64$ and $([x^2y^2]F^2)^2 = 0$ but $64\equiv 1 \mod 3$.


Note that our result shows that the $c_2$ invariant at a fixed $p$ determines the $c_2$ invariant modulo $p$ at all powers of that same prime $p$.  Moving from working modulo $p$ to working modulo the prime power, the conjecture does not claim that the $c_2$ invariant at a fixed $p$ should determine the $c_2$ invariant at all powers of that prime (modulo the prime power), but rather it claims that knowing the $c_2$ invariant at \textit{all} primes should determine it at all prime powers (modulo the prime power).  A stronger prime-by-prime version of the conjecture would be that knowing the $c_2$ invariant of a graph at a fixed $p$ should determine the $c_2$ invariant at all powers of that prime $p$ (modulo the prime power).  This stronger prime-by-prime version of the conjecture is false as the graphs $P_{8,36}$ and $P_{8,37}$ from \cite{Sphi4} have $c_2^{(2)}=1$ (with different values of $c_2$ at other primes) but $c_2^{(8)}=1$ and $c_2^{(8)}=-1$ respectively\footnote{Thanks to Oliver Schnetz for this calculation}, which are different modulo $8$ but equal modulo $2$ as required by our theorem.  We still expect the full conjecture to be true but, for example, information at primes other than 2 would be needed to determine the value of the $c_2$ invariant at $8$.
  
In conclusion we have used the fruitful edge partitioning approach to prove that Conjecture~\ref{oliver} for the $c_2$ invariant holds modulo $p$, but other techniques will be needed to prove the full conjecture.
\bibliographystyle{plain}
\bibliography{main}

\end{document}